\documentclass[12pt,reqno]{amsart}

\usepackage{amssymb,amsmath,amsthm,mathtools,wasysym,calc,verbatim,enumitem,tikz,pgfplots,hyperref,url,mathrsfs,fullpage,bbm,comment}
\usepackage[noadjust]{cite}
\usepackage{comment}
\mathtoolsset{showonlyrefs}
\pgfplotsset{compat=1.18}
\addtolength{\footskip}{\baselineskip/2}

\newtheorem{theorem}{Theorem}

\newtheorem{lemma}[theorem]{Lemma}
\newtheorem{claim}[theorem]{Claim}
\newtheorem{corollary}[theorem]{Corollary}

\newtheorem{fact}[theorem]{Fact}

\newtheorem{obs}[theorem]{Observation}

\newtheorem*{claim*}{Claim}

\theoremstyle{remark}
\newtheorem*{remark*}{Remark}

\usepackage{thmtools, thm-restate}

\numberwithin{theorem}{section}

\renewcommand{\phi}{\varphi}

\renewcommand{\leq}{\le}
\renewcommand{\geq}{\ge}

\newcommand{\cE}{\mathcal E}

\def\1{\mathbbm{1}}

\usepackage{tikz-cd}
\usetikzlibrary{positioning,arrows.meta,calc}


\renewcommand{\le}{\leqslant}
\renewcommand{\ge}{\geqslant}

\newcommand{\EE}{\mathbb E}

\newcommand{\PP}{\mathbb P}

\DeclareMathOperator{\Pois}{Pois}
\DeclareMathOperator{\Mult}{Mult}

\newcommand{\Bin}{\operatorname{Bin}}

\newcommand{\NN}{\mathbb{N}}















\title{The Multicolour Size Ramsey Number of a Path}
\author{Csongor Beke, Anqi Li and Julian Sahasrabudhe}
\email{\{cb2138, jdrs2\}@cam.ac.uk, aqli@stanford.edu}
\thanks{Julian Sahasrabudhe is supported by European Research
Council (ERC) Starting Grant “High Dimensional Probability and Combinatorics”, grant
No. 101165900}

\begin{document}
\begin{abstract}
 In this paper, we determine the $r$-colour size Ramsey number of the path $P_k$, up to constants. In particular, for every $r \geq 2$ and $k \geq 200\log r$, we have
    \[ \widehat{R}_r(P_k)=\Theta((r^2 \log r) \, k).\] 
Perhaps surprisingly, we do this by improving the lower bound on $\widehat{R}_r(P_k)$.
\end{abstract}
\maketitle

\vspace{-2em}

\section{Introduction}

We write $G \to_r H$ if every $r$-colouring of $G$ contains a monochromatic copy of $H$. The $r$-\emph{colour size Ramsey number} of a graph $H$, denoted $\widehat{R}_r(H)$, is the smallest number of edges in a graph $G$ for which $G \to_r H$. That is,
\[ \widehat{R}_r(H)= \min \{ e(G) : G \to_r H \}.\] 
This is a natural relative of the classical $r$-colour Ramsey number of a graph $H$, denoted $R_r(H)$, which is the minimum number of \emph{vertices} in a graph $G$ for which $G\rightarrow_r H$.

The size Ramsey number was originally introduced by Erd\H{o}s, Faudree, Rousseau and Schelp~\cite{EFRS78} in the 1970s and has since become a central topic in  Ramsey theory (see \cite{CFS15}).  One of the most celebrated results in this line is the 1983 work of Beck~\cite{B83} who showed, perhaps surprisingly, that the size Ramsey number of a path is as small as it could possibly be: linear in its number of vertices. That is, he showed that
\[ \widehat{R}_2(P_k)= \Theta(k),\] thereby settling a question of Erd\H{o}s.

While Beck's work also implies that $\widehat{R}_r(P_k)= \Theta_r(k)$, it leaves open the interesting question of the dependence of $\widehat{R}_r(P_k)$ on the number of colours $r$, a problem which has gained some traction in the community in recent years. To date, the best-known bounds are 
\begin{equation}\label{eq:upper}
    \hspace{.8em} c_1 r^2k \leq \widehat{R}_r(P_k) \leq c_2r^2(\log r) k,
\end{equation}
where $c_1,c_2 >0$, $r$ is fixed and $k$ is sufficiently large. The upper bound is due to Krivelevich~\cite{K19} via a clever application of the depth first search algorithm to the host graph $G(C_1rk, C_2\log r/k)$ for some suitable constants $C_1, C_2 > 1$, and the implicit constants has since been improved by Dudek and Pra\l at \cite{DP17}. The lower bound is due to Dudek and Pra\l at \cite{DP17}, though the implicit constant has been improved by Krivelevich \cite{K19}. In this paper, we determine $\widehat{R}_r(P_k)$ up to constant factors. 

\begin{theorem}\label{thm:main}
    For any $r$ and $k \geq 200\log r$, we have 
    $$\widehat{R}_r(P_k)=\Theta\left((r^2 \log r) \,k\right).$$  
\end{theorem}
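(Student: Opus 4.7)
The upper bound $\widehat{R}_r(P_k) \le c_2 r^2 (\log r) k$ is already due to Krivelevich \cite{K19} via a suitable random graph, so the remaining content of Theorem~\ref{thm:main} is the matching lower bound, which improves the previous $c_1 r^2 k$ of Dudek--Pra\l at \cite{DP17}. The plan is to prove the contrapositive: every graph $G$ with $e(G) < c_0 r^2 (\log r) k$ admits an $r$-edge-colouring of $E(G)$ with no monochromatic copy of $P_k$.

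First I would make some structural reductions. Isolated vertices can be discarded, and pendants can be peeled iteratively with essentially no loss. By a greedy handling of very high-degree vertices (colouring their incident edges using a balanced palette split), one can further reduce to the case where the maximum degree of $G$ is $O(r \log r)$, matching the density in the extremal random construction. It is also worth noting that $G \to_r P_k$ forces $|V(G)| \gtrsim rk$: if $|V(G)| < rk$ then $V(G)$ may be partitioned into $r$ blocks each of size less than $k$, and colouring each block's edges in its own colour avoids monochromatic $P_k$.

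The core step is a probabilistic colouring argument. I would colour each edge of $G$ independently and uniformly at random with a colour in $[r]$, so that each colour class $G_i$ is a $1/r$-random subgraph of $G$. A crude first-moment bound over $P_k$-copies in $G$ only recovers the $r^2 k$ bound because $G$ may contain exponentially many $k$-paths. To gain the extra $\log r$ factor, I would use a depth-first search long-path argument in the spirit of Krivelevich: if $G_i$ contains $P_k$, then there is a vertex subset $S \subseteq V(G)$ with $|S| \ge k$ whose internal density in $G_i$ is at least a constant. The probability of such an $S$ existing in the random subgraph $G_i$ is controlled by concentration, and the key quantitative input is that the scaled density $d/r = O(\log r)$ is precisely at the threshold of long-path expansion in a random subgraph of that density. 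A union bound over colours $i \in [r]$ and candidate subsets $S$ then finishes the argument.

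The main obstacle I anticipate is making the expansion argument work for \emph{arbitrary} $G$ near the threshold, not only graphs close to random. In particular, $G$ may have a highly inhomogeneous degree sequence or a mixture of dense pockets and sparse bridges, and a plain uniform random colouring may not simultaneously kill all monochromatic $P_k$'s. My approach would be to bucket vertices (and hence edges) by degree at dyadic scales and run a separate random colouring on each bucket, tuning the palette split so that within each bucket the per-colour-class density stays below the expansion threshold. Executing this bucketing uniformly and extracting the optimal $\log r$ saving is where I expect the bulk of the technical work to lie.
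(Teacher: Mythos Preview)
Your approach has a genuine gap: uniform random edge-colouring does not work, and the proposed degree-bucketing fix cannot rescue it. Concretely, take $G$ to be a disjoint union of $\Theta(k\log r)$ copies of $K_{10r}$, so that $e(G)=\Theta(r^{2}(\log r)k)$ and $G$ is $(10r-1)$-regular. Under a uniform random $r$-edge-colouring, each colour class restricted to a single clique is distributed as $G(10r,1/r)$, which is well inside the supercritical regime (average degree $\approx 10$) and with high probability contains a path of length $\Theta(r)$. Since the theorem must cover $k=100\log r\ll r$, every colour class then contains a monochromatic $P_k$. Because this $G$ is regular, bucketing by degree yields a single bucket and changes nothing; the failure is not about inhomogeneity but about the fact that an average-degree-$\Theta(\log r)$ random subgraph of a dense host already has linear-length paths. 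Your heuristic that ``$d/r=O(\log r)$ is precisely at the threshold of long-path expansion'' is off by the whole $\log r$ factor you are trying to gain.

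The paper does something quite different and the mechanism producing the extra $\log r$ is not an expansion/DFS threshold at all. Rather than colouring edges randomly, it iteratively peels off $P_k$-free subgraphs $H_1,H_2,\ldots$ and assigns each a fresh colour. To find one large $P_k$-free $H\subseteq G$, it takes a bipartition $V=A\cup B$, partitions $A=A_1\cup\cdots\cup A_q$ uniformly at random with $|A_i|<k/2$, and then assigns each $b\in B$ \emph{greedily} to the part $A_i$ maximising $|N(b)\cap A_i|$; this guarantees $G[A_i,B_i]$ is $P_k$-free while capturing, per vertex of $B$, the maximum load in a balls-and-bins allocation rather than the average. That maximum exceeds the average by a factor $x$ solving $x\log x\approx (q\log q)/\Delta(G)$, and crucially $x$ \emph{increases} as $\Delta(G)$ decreases. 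Hence successive peels remove an ever-larger fraction of the remaining edges; it is this bootstrapping through the heavy Poisson tails, not any property of random edge-colouring, that manufactures the $\log r$ saving.
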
 

Our approach in this paper is based on a novel randomised edge-colouring strategy that avoids generating large (in a suitable sense) monochromatic connected components. In particular, we define a random process that removes, from a given graph $G$, a $P_k$-free graph in each round. One interesting feature of this random process is that it ``improves'' as the process runs. 

\section{Proof overview}\label{sec:sketch}
In this section, we begin by sketching the main idea in the proof and then go on to describe the setup and some high-level intuition behind our random colouring process. 

\subsection{A first attempt}\label{sec:sketch-main} 

Given a graph $G$ with 
\begin{equation}\label{eq:c0}
    e(G) \leq c_0 (r^2\log r)\, k,
\end{equation}
for some $c_0$ to be determined and $r,k$ sufficiently large, we iteratively find large $P_k$-free subgraphs of $G$, assign an unused colour for them and remove them from $G$. 

We now motivate a first attempt where we aim to partition $G = H_1 \cup H_2 \cup \cdots \cup H_r$ so that for each colour class $i \in [r]$, there is a further partition \[V(H_i) = X_1^{(i)}\cup X_2^{(i)}\cup \ldots \cup X_q^{(i)},\] where
\[q = \frac{3v(G)}{k}, \qquad |X_j^{(i)}|\leq \frac{k}{2}, \hspace{0.9em} \text{and} \hspace{0.9em}  H_i =G[X_1^{(i)}]\cup \dots \cup G[X_q^{(i)}],\]
for each $i\in [r]$ and $j \in [q]$. That is, the only edges in the $i$th colour class $H_i$ are between vertices from the same component $X_j^{(i)}$. This is clearly a monochromatic $P_k$-free colouring, as each connected monochromatic component has size at most $k/2$. Here we construct the $i$th colour class $H_i$ iteratively by considering \[G=F_1 \supset F_2 \supset \dots \supset F_r \supset F_{r+1}=\emptyset,\] where $F_{i+1} =  F_i \setminus H_i$ for $i \in [r]$. Here $V(F_i) = V(G)$ for $i \in [r]$, with the union of the $H_i$ covering the whole graph. 

A naive first attempt might be to uniformly at random partition the vertices of $V(G)$ into $q$ parts. Each edge would then have probability around $1/q$ of being contained in a given $H_i$, so $e(H_i) \approx e(F_i)/q$. In particular, we would have 
\[e(F_{r+1})=e(G)\left(1- \frac{1}{q} \right)^r\simeq e(G)\exp\left(- \frac{r}{q} \right)\simeq e(G)e^{-1},\] 
so even after $r$ colours, we still have a constant fraction of the edges left uncoloured (which is not enough for this strategy). 

The crucial observation behind this paper is that, in the context of this problem, we can in fact do significantly better. That is, we will be able to remove $P_k$-free subgraphs $H_i$ with $e(H_i) \gg e(F_i)/q$. We now turn to outline a randomised greedy approach that finds one such colour class $H_i$ with many edges.

\subsection{The random greedy approach}\label{sec:random-greedy}

We make two key changes to the above strategy. Earlier, we described forming the colour classes $H_j$ by a union of disjoint connected components, each of at most $k-1$ vertices, to ensure each $H_j$ is $P_k$-free. The first key change we make is that we aim for a slightly weaker constraint: we ensure that each connected component is an induced bipartite graph $G[A_i,B_i]=\{uv\in E(G):u\in A_i, v\in B_i\}$ and use the following observation to guarantee that each $G[A_i,B_i]$ is $P_k$-free: 
\begin{equation}\label{eq:pk-free}
    |A_i| < k/2 - 1 \;  \Longrightarrow \; G[A_i, B_i] \not \supset P_k,
\end{equation}
since any path alternates between the two parts.

Another benefit of this bipartite structure is that it gives us a natural way to improve upon the naive randomisation strategy in the previous section. We choose the sets $\{A_i \}_i$ as before, uniformly at random. The other key change is that instead of choosing $\{ B_i\}_i$ as a uniformly random partition of the remaining vertices, we assign the remaining vertices to the $\{ B_i \}_i$ in a greedy fashion to capture more edges in $\bigcup_i G[A_i, B_i]$. This ensures that repeating this randomised greedy colouring $r$ times gives a genuine $r$-edge colouring of $G$.

 In more detail, we first extract a suitable bipartite subgraph with vertex set $A\cup B$ such that $e(A,B)$ is large relative to $e(G)$. For now, we can think about this as $e(A,B) \geq e(G)/2$ and $|A|=|B|=|V(G)|/2$. For some value of $q$ to be determined later, we find partitions
\begin{equation}\label{eq:partition}
    A = A_1 \cup \cdots \cup A_q\qquad \text{and}\qquad B =B_1\cup \cdots \cup B_q,
\end{equation}
with the property that for each $i\in [q]$ the bipartite induced subgraph $G[A_i,B_i]$ of $G$ between $A_i$ and $B_i$ is $P_k$-free. We then define
\begin{equation}\label{eq:H-def}
    H= G[A_1, B_1] \cup \cdots \cup G[A_q,B_q]
\end{equation}
to be our desired $P_k$-free subgraph.

The randomised greedy algorithm used to identify the partition $\{A_i\}_i \cup \{B_i\}_i$ is as follows. We first partition $A = A_1 \cup \cdots \cup A_q$ by assigning each vertex of $A$ to one of the $A_i$ uniformly at random. Then we define for each $i \in [q]$,
\begin{equation}\label{eq:process}
    B_i = \{ x \in B : |N(x) \cap A_i| \text{ is maximum among all } i \in [q]\},
\end{equation}
breaking ties arbitrarily. The key here is that, due to the heavy tails of the Poisson distribution, $H$ will induce many more edges than if we were to have chosen both of the $A_i$ and $B_i$ uniformly at random.

To analyse the number of edges in $H$, we are led to the well-studied ``balls-and-bins'' problem \cite{G81, RS98,MU17}. Focusing on a vertex $v\in B$ with $\deg_{G[A,B]}(v) = d$, we obtain the following problem: given $q$ bins (the sets $A_1,\dots,A_q$), and $d$ balls ($N(v) \cap A$), we throw each ball into one of the $q$ bins uniformly and independently at random. How many balls are there in the bin that has the most balls? That is, given a random function $f\colon [d] \to [q]$, we define
\begin{equation}\label{eq:MW-def}
    M_{q,d} = \max_{i \in [q]} |f^{-1}(i)| \qquad \text{and} \qquad W(q,d)=\mathbb{E} \hspace{0.1em} M_{q,d}/d.
\end{equation}

Thus to understand the number of edges in $H$, we need lower bounds on $W(q,d)$, or equivalently $\mathbb{E} M_{q,d}$. For this, we note that the number of balls in a given bin has distribution $\mathrm{Bin}(d,1/q)$, which can be well approximated by $\mathrm{Pois}(d/q)$. To estimate $M_{q,d}$, we consider the threshold $t$ for which the expected number of bins with at least $t$ balls is of order $1$. That is, we choose $t$ so that
\begin{equation}\label{eq:Pois-approx}
q \cdot \mathbb{P}(\mathrm{Pois}(d/q) \ge t) \approx 1.
\end{equation}
Thus, we expect that with constant probability there exists a bin with at least $t$ balls, and in particular $\mathbb{E} M_{q,d} \gtrsim t$. The main reason why the random greedy approach improves on the naive approach is the heavy tail of the Poisson distribution. Solving the relation \eqref{eq:Pois-approx} in the relevant range leads to the condition
$t(\log (tq/d))\gtrsim \log q$. Therefore, if we define
\begin{equation}\label{eq:W_0def}
    W_0(q, d)=\frac{x}{10q} \qquad \text{where} \qquad x\log x= \frac{q \log q}{d},
\end{equation}
we expect the following lemma to hold, the proof of which is deferred to Appendix~\ref{sec:appMbound}.

\begin{restatable}{lemma}{ballsandbins}\label{lem:ballsunified}
    For $d,q \in \NN$ with $q >1$, let $W_0(q,d)$ be defined as in \eqref{eq:W_0def}. Then \[W(q,d)\geq W_0(q,d).\]
\end{restatable}
Note that $W_0(q, d)=x/(10q)$, so $x$ can be interpreted as the ``factor of improvement'' over the naive approach. We also note that Lemma~\ref{lem:ballsunified} is essentially a sharp bound and one can really think of $W_0$ and $W$ as essentially interchangeable.

 Before we bound the number of edges in $H$, we note a basic fact about the function $W_0(q,d)$. The simple proof is deferred to Appendix~\ref{sec:appWprop}.

\begin{restatable}{fact}{mono}\label{fact:W_0mono}
    For $q,d\in \NN$, and $W_0(q,d)$ defined as in \eqref{eq:W_0def}, we have that $W_0(q,d)$ is monotonically decreasing in both $q$ and $d$. 
\end{restatable}
We can now bound the expected number of edges in $H$. 
\begin{obs}\label{obs:H}
    $\mathbb{E} \, e(H) \geq  e(A,B) \cdot W_0\left( q, \Delta(G)\right)$.
\end{obs}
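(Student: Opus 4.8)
The plan is to prove Observation~\ref{obs:H} by a direct linearity-of-expectation computation, working vertex-by-vertex on the side $B$ of the bipartition. First I would fix the randomness: the partition $A = A_1 \cup \cdots \cup A_q$ is obtained by assigning each vertex of $A$ independently and uniformly to one of the $q$ parts, and then the $B_i$ are determined deterministically from this partition via \eqref{eq:process}. The edges of $H$ are exactly the edges of $G[A,B]$ that go from some $A_i$ to the corresponding $B_i$; equivalently, an edge $xv$ with $x \in A$, $v \in B$ lies in $H$ precisely when $v \in B_i$ for the (random) index $i$ with $x \in A_i$. Hence, for a fixed $v \in B$, the number of edges of $H$ incident to $v$ equals $|N(v) \cap A_{i(v)}|$, where $i(v) \in [q]$ is the index maximising $|N(v) \cap A_i|$ over all $i$. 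So summing over $v \in B$,
\[
e(H) = \sum_{v \in B} \max_{i \in [q]} |N(v) \cap A_i|.
\]

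Next I would compute the expectation of each summand. Restricting the random partition to the neighbourhood $N(v) \cap A$, which has size $\deg_{G[A,B]}(v) =: d_v$, the map sending each neighbour of $v$ to its part index is a uniformly random function $f_v\colon [d_v] \to [q]$, independent across the relevant structure, and $\max_{i} |N(v) \cap A_i|$ has exactly the distribution of $M_{q, d_v}$ as defined in \eqref{eq:MW-def}. Therefore $\mathbb{E} \max_i |N(v)\cap A_i| = \mathbb{E}\, M_{q,d_v} = d_v \cdot W(q, d_v)$. Since $d_v \le \Delta(G)$ for every $v$, the monotonicity of $W$ in its second argument (Lemma~\ref{lem:mono}) gives $W(q, d_v) \ge W(q, \Delta(G))$, so $\mathbb{E} \max_i |N(v)\cap A_i| \ge d_v \cdot W(q, \Delta(G))$. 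Summing over $v \in B$ and using $\sum_{v \in B} d_v = e(A,B)$ yields
\[
\mathbb{E}\, e(H) \ge W(q, \Delta(G)) \sum_{v \in B} d_v = e(A,B) \cdot W(q, \Delta(G)),
\]
as claimed.

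I do not anticipate a serious obstacle here; the statement is essentially a bookkeeping identity combined with the already-established monotonicity lemma. The one point that needs a little care is the reduction from "the max over $N(v)\cap A_i$" to an honest copy of $M_{q,d_v}$: one must note that conditioning on the part-assignments of vertices outside $N(v)$ does not affect the assignments inside $N(v)\cap A$ (they are mutually independent), and that ties in the definition of $B_i$ are broken in a way that is irrelevant to the count $|N(v)\cap A_{i(v)}|$, since $|N(v)\cap A_{i(v)}|$ is the maximum regardless of which maximiser is selected. It is also worth remarking that $W(q, \cdot)$ is only defined for positive integer second argument, so vertices $v$ with $d_v = 0$ contribute $0$ on both sides and can be harmlessly discarded, while for $d_v \ge 1$ the bound $W(q,d_v)\ge W(q,\Delta(G))$ is exactly Lemma~\ref{lem:mono}.
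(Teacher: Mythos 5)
Your proof is correct and follows essentially the same route as the paper: linearity of expectation over $v\in B$, identification of $v$'s contribution to $e(H)$ with the distribution of $M_{q,\,|N(v)\cap A|}$, and monotonicity of $W$ in its second argument (Lemma~\ref{lem:mono}) to pull out the worst-case degree $\Delta(G)$. Your version is slightly more careful than the paper's write-up in that you use the bipartite degree $d_v=\deg_{G[A,B]}(v)=|N(v)\cap A|$ (which is what is actually needed, both for the ball count and for $\sum_v d_v=e(A,B)$), whereas the paper writes $d(v)=\deg_G(v)$, a harmless notational slip; your remarks on tie-breaking and the $d_v=0$ case are likewise correct but not load-bearing.
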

\begin{proof}
    A single vertex $v \in B$ of degree $d(v) = \deg_{G[A,B]}(v)$ contributes on expectation $\mathbb{E}\hspace{0.1em} M_{q, \hspace{0.1em}d(v)}=W(q,d)\cdot d(v)$ edges to $H$ (since its neighbours in $A$ are assigned independently to the bins $A_i$). It follows that
\[ \mathbb{E} \hspace{0.2em}e(H) = \sum_{v\in B} d(v)\cdot W\left(q,\, d(v) \right)\geq\sum_{v\in B} d(v)\cdot W_0\left(q,\, d(v) \right)\geq e(A,B) \cdot W_0\left( q, \Delta(G)\right),\]
where we used Lemma~\ref{lem:ballsunified} and the monotonicity in $d$ of $W_0(q,d)$ by Fact~\ref{fact:W_0mono}. 
\end{proof}

In our application, we choose $q = 3|A|/k$, so that on expectation for each $i \in [q]$, we have 
\begin{equation}\label{eq:Ai}
  \mathbb{E}\,|A_i| = |A|/q < k/2 - 1,  
\end{equation}
as required by \eqref{eq:pk-free}.

Combining \eqref{eq:Ai} and Observation~\eqref{obs:H}, it is then not too difficult (see Lemma~\ref{lem:bipartite-finding} for further details) to show with positive probability that $H$ as constructed in \eqref{eq:H-def} gives the $P_k$-free graph $H$ with 
\begin{equation}\label{eq:withconcentration}
    e(H) \geq \frac{2e(A,B)}{3} \cdot W_0(q, \Delta(G)).
\end{equation}

As the definition of $W_0(q,d)$ is a bit hard to work with, it will be more convenient to apply the following Fact.

\begin{restatable}{fact}{lambdaballsandbins}\label{fact:lambdaballsunified}
    There exists $0<\lambda_0<1$ so that the following holds. For any $r\geq 2$, $\lambda\in (0,\lambda_0)$ so that $d=\lambda r\log r$ is a positive integer, and  $q\leq 20r/\lambda$ we have
    \[W_0(q,d)\geq \frac{120}{\lambda^{0.9}r}.\]
\end{restatable}

It is worth noting that the factor of $\lambda^{0.9}$ is crucial in the above lemma, as it is this factor that allows us to do better than the naive random approach of Section~\ref{sec:sketch-main}. There, we were only able to guarantee $\Omega(1/r)$ fraction of edges in a $P_k$-free graph, while we can now, in fact extract $\Omega(1/(r\lambda^{0.9}))$ fraction of edges in each round. We next study a special case of the problem to give an illustration of our bounds in action. 

\subsection{Toy example}

In this toy example, we assume that our graph  $G$ is such that
\[ v(G) = 100rk, \quad\text{and}\quad e(G) \leq \lambda_0 r^2 \log r\, k, \]
 with $\lambda_0$ is as in Fact~\ref{fact:lambdaballsunified} and we exhibit an $r$ colouring of $G$ without monochromatic $P_k$. Note that this falls in line with the upper bound constructions of $G(C_1rk, C_2 \log r/k)$ in \eqref{eq:upper}.
 
We would like to apply the above strategy of extracting $P_k$-free subgraphs in turn, each forming a new colour class. Recall that the first step of the algorithm is to pass to a suitable bipartite subgraph. In this case, we take $V(G)=A\cup B$, so that $|A|= 50rk$ and $e(A,B)\geq e(G)/2$ (for instance, by considering a uniformly random partition). For illustration purposes, we make the additional assumption that $G[A,B]$, and in fact all of these bipartite subgraphs that we encounter, are regular. This is, of course, a strong assumption and much of the technical work in this paper is about how to get around this.

In the previous subsection, we outlined a randomised greedy algorithm to find a $P_k$-free subgraph $H \subset G$ for which $e(H)\geq e(G)/3\cdot W_0(q,\Delta(G))$ as in \eqref{eq:withconcentration}. We define $G_0=G$, and having defined some $G_i$, we let $H_i$ be the $P_k$-free subgraph found by applying this algorithm with parameter $q=150r$ (chosen to satisfy \eqref{eq:Ai}) to the bipartite subgraph $G_i[A_i, B_i] \subset G_i$ obtained from a uniformly random bipartition. Let $H_i$ be the $i$th colour class and set $G_{i+1}=G_i\setminus H_i$. 

To analyse this process, we define $e(G_i)=100\lambda_ir^2\log r k$, so that \[q=150r\leq20r/\lambda_i \qquad \text{and}\qquad \Delta(G_i[A_i,B_i])\leq\lambda_ir\log r\]
by the regularity assumption. Combining Observation~\ref{obs:H} and Fact~\ref{fact:lambdaballsunified}, it follows that
\[ e(H_i)\geq \frac{e(G_i)}{3}\cdot W_0(q,\Delta(G))\geq\frac{40e(G_i)}{\lambda_i^{0.9} r}.\]
This in turn implies that
\[\lambda_{i+1}r^2\log r k=e(G_{i+1})=e(G_i)-e(H_i)\leq \lambda_{i}r^2\log r k\left(1-\frac{40}{\lambda_i^{0.9} r}\right).\]
By using the bound $(1-x)^{0.9}\leq 1-0.9x$, we obtain the following recursion on $\lambda_i$:
\begin{equation}\label{eq:bootstrap}
    \lambda_{i+1}^{0.9}\leq \lambda_i^{0.9}-\frac{36}{r}.
\end{equation}
In words, since $\lambda_i$ tracks the average degree of this algorithm, \eqref{eq:bootstrap} shows that this process must terminate after at most $r/36$ steps, which corresponds to using at most $r/36$ many colours, before we have removed all the edges of $G$. That is, the boost of $\lambda_i^{0.9}$ exactly allowed us to do better than the naive approach in Section~\ref{sec:sketch-main} and obtain the desired $r$-colouring.

There are two white lies in the toy example above. As we have already explained, the first is the strong regularity assumption, and the second is the assumption that $v(G)=O(rk)$. The regularity assumption turns out to be the bigger issue. The first observation is that we may take $\Delta(G)$ in \eqref{eq:withconcentration} to be the maximum degree of $G$, and the same inequality stays true. By the monotonicity of $W_0(q,d)$, we would like to keep both $\Delta(G[A,B])$ and $|A|$ small to fit in the range of parameters in Fact~\ref{fact:lambdaballsunified}. We elaborate on how to do so in the next section, establishing Lemma~\ref{lem:main} that identifies a $P_k$-free subgraph with $\Omega(e(G)/(\beta^{0.9}\cdot r))$ edges, where $\beta=\Delta(G)/(r\log r)$.

\section{The key lemma}\label{sec:key-lemma} 

In this section, we state and prove the key lemma that allows us to extract dense $P_k$-free subgraphs.

\begin{restatable}[Key lemma]{lemma}{mainlemma}\label{lem:main}
There exists a constant $\beta_0 \in (0,1]$ such that for all sufficiently large $r$ and $k\geq 200\log r$ we have the following. Let $G$ be a graph with 
\[e(G) \leq (r^2 \log r) \, k\hspace{0.8em} \text{and} \hspace{0.8em}\Delta(G) = \beta r \log r\in \NN \]
where $\beta \in (0,\beta_0]$.
Then there exists a $P_k$-free subgraph $H \subset G$ such that \[e(H) \geq \dfrac{10 e(G)}{\beta^{0.9} r}.\] 
\end{restatable}

A crucial feature of Lemma~\ref{lem:main} is that a smaller $\beta$ gives rise to a larger fraction of edges in $H$ (i.e. a larger $e(H)/e(G)$), which is essential in deriving an analogue of \eqref{eq:bootstrap} in the case of a general graph $G$. For the application of Lemma~\ref{lem:main}, see Section~\ref{sec:main-pf}.

We prove Lemma~\ref{lem:main} in two steps. First, we show that \eqref{eq:Ai} and Observation~\ref{obs:H} can be guaranteed up to constants with positive probability, hence obtaining a statement on finding large $P_k$-free graphs in bipartite graphs. Then, we turn to proving Lemma~\ref{lem:main}, where the essential step is to identify the bipartite graph within which we apply the following lemma.

\begin{restatable}[Finding $H$ in a bipartite graph]{lemma}{biglem}\label{lem:bipartite-finding}
    For $r$ sufficiently large and $k\geq 200\log r$, let $G$ be a bipartite graph with $e(G) \leq (r^2 \log r)\,k$ that contains no isolated vertices. Let $V(G)=A\cup B$ be the vertex partition of $G$, and let $\Delta = \max_{v \in B} \deg(v)$. Then $G$ has a $P_k$-free subgraph $H$ such that
    \[e(H)\geq \frac{2e(G)}{3}\cdot W_0(q,\Delta) \qquad \text{where} \qquad q = \left \lfloor \frac{10|A|}{k}\right\rfloor.\]
\end{restatable}

\begin{proof}
For $i \in [q]$, we define $A_i$ and $B_i$ according to the process described in the previous section (see for instance \eqref{eq:process}). Then we define $H$ as in \eqref{eq:H-def}. We show that this $H$ satisfies the claimed properties with positive probability. 

Observe that for $i\in [q]$, we have
\[ \mathbb{E}\hspace{0.1em}|A_i|=\frac{|A|}{q}=|A| \cdot \left \lfloor \frac{10|A|}{k}\right\rfloor^{-1} \qquad \text{so} \qquad \frac{k}{10} <\mathbb{E} \hspace{0.1em}|A_i|\leq \frac{k}{5}.\]

Next, we define the event  $\cE = \{ |A_i| < k/2 - 1 : \forall \; i \in [q] \}$. Since $|A_i| \sim \Bin(|A|,1/q)$ for each $i\in [q]$, by the Chernoff bound we have
\[ \PP\big(|A_i| \geq k/2 - 1 \big)\leq \PP\big(|A_i| \geq 2\mathbb{E} \hspace{0.1em}|A_i|\big) \leq 
\exp (-\mathbb{E} \hspace{0.1em} |A_i|/3)\leq \exp(-k/30).\]
A simple fact from the definition of $W_0$ in \eqref{eq:W_0def} is $W_0(q,\Delta)\geq 1/(10q)$. Since $G$ contains no isolated vertices, $|A|\leq (r^2 \log r)\,k$ and $q\leq 10r^2 \log r\ll r^{5/2}$, so for sufficiently large $r$ we obtain
\[ \PP(\cE^c) \leq\sum_{i=1}^q\PP\big(|A_i|\geq k/2-1 \big)\leq  q \exp(-k/30 )\leq 10W_0(q,\Delta) \hspace{0.1em} q^2r^{-5}\leq W_0(q,\Delta)/3,\]
where we used $k\geq 200\log r$, 

Recall from Observation~\ref{obs:H} that  $\EE \hspace{0.1em} e(H)\geq e(A,B) \cdot W_0\left( q, \Delta(G)\right)$. By construction, we have $e(H)\leq e(A,B)$ and thus,
\[ \EE \hspace{0.1em}  e(H) \1(\cE)\geq e(A,B) \, W_0(q,\Delta) - e(A,B) \PP(\cE^c)\geq 2e(A,B) \, W_0(q,\Delta)/3. \]

This means that there exists a partition $A = A_1 \cup \cdots \cup A_q$ with $|A_i| < k/2 - 1$ for all $i \in [q]$ and an associated subgraph $H$ for which 
\[ e(H) \geq 2e(A,B) \, W_0(q,\Delta)/3 . \] 
In particular, by \eqref{eq:pk-free} it follows that $H$ is $P_k$-free as well. This completes the proof. 
\end{proof}

In the remainder of this section, we show how to deduce Lemma~\ref{lem:main} from Lemma~\ref{lem:bipartite-finding}. Given $G$ in the setting of Lemma~\ref{lem:main}, we aim to identify a bipartite subgraph of $G$ to apply Lemma~\ref{lem:bipartite-finding}. To select this subgraph we first find a partition 
\begin{equation}\label{eq:part-G}
    G = E_1\cup E_2\cup\dots \cup E_T,
\end{equation}
where each $E_j$ has a vertex partition $V(E_j)=V_j\cup U_j$ where $U_j$ is independent in $E_j$, and such that $E_j$  is ``approximately regular''. This gives us simultaneous control of $|V_j|$ and $\Delta(E_j)$, ensuring they lie in the range of parameters specified by Fact~\ref{fact:lambdaballsunified}.

We will select $j$ with $e(E_j)\geq e(G)/2^j$, then pass to a bipartite subgraph of $E_j$ with at least $e(E_j)/2$ edges. By applying Lemma~\ref{lem:bipartite-finding} to this bipartite subgraph, we obtain the desired large $P_k$-free subgraph with many edges.

\begin{proof}[Proof of Lemma~\ref{lem:main}] Let $\theta  >0$ be such that $\theta^{0.9}=1/2$, and let $T\in \mathbb{N}$ be minimal such that $\Delta(G)\cdot \theta^{T}< 1$. Consider the following process. In the first step, we define \[ V_1=\big\{v\in V(G): \deg_{G}(v)\in[\theta\cdot \Delta(G), \Delta(G)] \big\},\] and let $E_1\subset G$ be the subgraph containing all edges that are incident to a vertex of $V_1$. We note that $\Delta(G\setminus E_1)\leq \theta\cdot \Delta(G)$. Now, if for some $j\leq T$ we have already defined $V_1, \dots, V_{j-1}$ and $E_1, \dots, E_{j-1}$, we next define\[ V_j=\big\{v\in V(G)\setminus V_{\leq j-1}: \deg_{G\setminus E_{\leq j-1}}(v)\in [\theta^j\Delta(G),\theta^{j-1}\Delta(G)] \big\}.\] Then let $E_j\subset G\setminus E_{\leq j-1}$ be the subgraph containing all edges that are incident to a vertex of $V_j$. By induction, we can prove that $\Delta(G\setminus \bigcup_{i\leq j}E_i)\leq \Delta(G)\cdot \theta^{j}$. Crucially, $E(G)=\bigcup_{j\leq T}E_j$, as $\Delta(G\setminus \bigcup_{j\leq T}E_j)\leq \Delta(G)\cdot \theta^{T}<1$.
    
We now claim that $e(E_j)\geq e(G)/2^j$ for some $j\in [T]$. To see this, assume otherwise. Then
    \[e(G)=\sum_{j=1}^Te(E_j)<e(G)\sum_{j=1}^{\infty}2^{-j}=e(G),\] which is a contradiction.  

Let $j \in [T]$ be such that $e(E_j) \geq e(G)/2^j$. By construction we can partition $V(E_j)=V_j\cup U_j$, where $U_j$ is an independent set, and 
\[\Delta(E_j)\leq \theta^{j-1}\Delta(G)=\theta^{j-1}\beta r\log r\quad \text{and} \quad \deg_{E_j}(v) \geq \theta^j\Delta(G) \quad\text{for all $v\in V_j$.}\]
Now, choose a partition of $V_j = A_j \cup B_j$ where 
\[ |A_j| = \lceil |V_j|/2 \rceil \qquad \text{ and } \qquad e_{E_j}(A_j, B_j \cup U_j) \geq e(E_j)/2\] (by considering a uniformly random partition of $V_j$, for example). We now apply Lemma~\ref{lem:bipartite-finding} to the induced bipartite graph $E_j[A_j, B_j \cup U_j]$, where we take $q=\lfloor 10|A_j|/k\rfloor$, to obtain a $P_k$-free subgraph $H\subset E_j$ satisfying
    \[e(H)\geq \frac{e(E_j)}{6} \;W_0(q,\Delta(E_j)).\]

Let $\lambda_j=\theta^{j-1}\beta\leq \beta_0\leq \lambda_0$, so that $\Delta(E_j)\leq\lambda_j r\log r$. By double-counting the edges of $E_j$, we get \[|V_j|\leq \frac{2e(E_j)}{\theta^{j}\Delta(G)}.\]
Therefore
\[ q=\left\lfloor\frac{10|A_j|}{k}
\right\rfloor=\left\lfloor\frac{10}{k}\cdot\left\lceil\frac{|V_j|}{2}\right\rceil\right\rfloor \leq \left\lfloor\frac{10}{k}\cdot\frac{2e(E_j)}{\theta^{j}\Delta(G)}\right\rfloor\leq \left\lfloor\frac{20(r^2\log r)k}{\theta^j\beta (r\log r) k}\right\rfloor\leq\frac{20r}{\lambda_j}.\]

    By the monotonicity of $W_0(q,d)$ in $q$ (Fact~\ref{fact:W_0mono}), and $e(E_j)\geq e(G)/2^j=\theta^{0.9j}e(G)$, we have
     \[e(H)\geq \frac{e(E_j)}{6} \;W_0(q,\Delta) \geq \frac{\theta^{0.9j}e(G)}{6} \cdot W_0\left(\frac{20r}{\lambda_j},\lambda_jr\log r\right).\]
     Then by Fact~\ref{fact:lambdaballsunified} we have
   \[e(H) \geq \frac{\theta^{0.9j}e(G)}{6} \cdot \frac{120}{r\cdot (\theta^{j-1}\beta)^{0.9}}=\dfrac{\theta^{0.9}\cdot 20  e(G)}{\beta^{0.9} r}= \dfrac{10 e(G)}{\beta^{0.9} r}, \]
   as desired.
   \end{proof}

\section{Proof of the main theorem}\label{sec:main-pf}

In this section, we prove our main theorem, Theorem~\ref{thm:main}. That is, given a graph $G$ with
$e(G)\leq c_0(r^2\log r)k$, we show that there is an $r$-colouring of $G$ with no
monochromatic $P_k$.

As discussed above, our strategy is to iteratively extract $P_k$-free subgraphs $H_0,H_1,\dots,H_{t-1}$ using
Lemma~\ref{lem:main}. To ensure that the maximum degree decreases nicely as the process runs, in each round, we will move all edges incident with ``high-degree'' vertices into a ``remainder'' graph $R$. This will allow us to ensure all edges not set aside appear in one of the $H_i$. To colour the edges in $R$, we observe that the graph $R$ has a small vertex cover, namely the high degree vertices that we set aside. Because the vertex cover number of $P_k$ is rather large (of size $k/2$), this means we may colour the edges of $R$ by partitioning its vertex cover into parts of size at most $k/3$. We then colour all edges incident to a given part with the same colour. 

\begin{proof}[Proof of Theorem~\ref{thm:main}] It suffices to prove the theorem for sufficiently large $r$, since the finitely many smaller values of $r$ can be absorbed into the choice of the constant $c_0$. Let $\beta_0$ be given by Lemma~\ref{lem:main}, let $r$ be sufficiently large  and take $\alpha=8/9$. We let
\[ c_0 \leq  \frac{1-\alpha}{10}\beta_0 \qquad \text{and} \qquad k \geq 200\log r,\]
and let $G$ be a graph with $e(G)\leq c_0(r^2\log r)k$ edges. We show that there is an $r$-colouring of $G$ with no monochromatic $P_k$.

First, we iteratively remove vertices of $G$ that have degree higher than $\beta_0r\log r$, and move them into a set $V_{-1}$. Note that
\[|V_{-1}|\leq\frac{c_0r^2(\log r)k}{\beta_0r\log r}\leq \frac{(1-\alpha)rk}{10}\leq \frac{rk}{90}.\]
Let the resulting graph be $G_0$. Then $G_0$ satisfies
\[e(G_0)\leq c_0\, r^2(\log r)k\qquad\text{and}\qquad
\Delta(G_0)\leq \beta_0\, r(\log r).\]

For each $i\geq 0$, we will construct a graph $G_i$ with parameters
\begin{equation}\label{eq:prop-Gi} e(G_i)=c_i\, r^2(\log r)k
\qquad\text{and}\qquad
\Delta(G_i)\leq \beta_i\, r(\log r), \qquad\text{where}\qquad \beta_i=\beta_0\Bigl(\frac{c_i}{c_0}\Bigr)^\alpha,\end{equation}
as long as $\beta_i\geq1/(r\log r)$. Once $\beta_i<1/(r\log r)$, we terminate the process and set $t=i$. Note that $\Delta(G_t)<1$, so $G_t$ is the empty graph.

Given $G_i$ satisfying \eqref{eq:prop-Gi}, we may apply Lemma~\ref{lem:main} to $G_i$ with $\beta=\beta_i$, obtaining a $P_k$-free subgraph $H_i\subset G_i$ with
\[e(H_i)\geq \frac{10\,e(G_i)}{\beta_i^{0.9}r}.\]

Set $F_{i,0}:=G_i\setminus H_i$. For $j\geq 0$, while $F_{i,j}$ contains a vertex of degree more than
\[\beta_0\Bigl(\frac{e(F_{i,j})}{c_0 r^2(\log r)k}\Bigr)^\alpha r(\log r),\]
we delete one such vertex and call the resulting graph $F_{i,j+1}$.
When this process stops, let $V_i$ be the set of deleted vertices, let $G_{i+1}=F_{i,|V_i|}$ and write $e(G_{i+1})=c_{i+1}\,r^2(\log r)k$. We
 then have by construction
\[\Delta(G_{i+1})\leq \beta_0\Bigl(\frac{c_{i+1}}{c_0}\Bigr)^\alpha r(\log r)
=\beta_{i+1}r(\log r).\]

In order to bound the number of iterations, we will need the following recursion on $\beta_i$.

\begin{claim}\label{claim:ci}
    We claim that 
    \[\beta_i^{0.9}\leq \beta_0^{0.9}-\frac{8 i}{r}.\]
\end{claim}

\begin{proof}
    Since $e(G_{i+1})\leq e(F_{i,0})=e(G_i)-e(H_i)$, we have
\[c_{i+1}\leq c_i\left(1-\frac{10}{\beta_i^{0.9}r}\right).\]
As long as the process has not terminated, we have $\beta_i\geq 1/(r\log r)$, and hence $\frac{10}{\beta_i^{0.9}r} <1$ for all sufficiently large $r$. Using $(1-x)^{0.8}\leq 1-0.8x$ for $x\in[0,1]$, we obtain
\[\beta_{i+1}^{0.9}=
\beta_0^{0.9}\Bigl(\frac{c_{i+1}}{c_0}\Bigr)^{0.8} \leq
\beta_0^{0.9}\Bigl(\frac{c_i}{c_0}\Bigr)^{0.8}
\left(1-\frac{10}{\beta_i^{0.9}r}\right)^{0.8} =
\beta_i^{0.9}\left(1-\frac{10}{\beta_i^{0.9}r}\right)^{0.8}\leq
\beta_i^{0.9}-\frac{8}{r}.\]
Iterating, we get
\[\beta_i^{0.9}\leq \beta_0^{0.9}-\frac{8 i}{r},\]
as desired.
\end{proof}

\begin{corollary}
    The iteration stops with $t \leq r/8$.
\end{corollary}
\begin{proof}
    As $\beta_t$ is a positive real number, and $\beta_0<1$, we get $t\leq r\cdot\beta_0^{0.9}/8\leq r/8$.
\end{proof}

Next, we control the total number of deleted vertices.

\begin{claim}
   We have
    \[\sum_{i=0}^{t-1}|V_i|\leq \frac{rk}{10}.\]
\end{claim}

\begin{proof} 
 For $0\leq j\leq |V_i|$, let $e(F_{i,j})=d_{i,j}\,r^2(\log r)k$, so when the vertex deleted in passing from $F_{i,j}$ to $F_{i,j+1}$ is removed, its degree is more than $\beta_0\Bigl(\frac{d_{i,j}}{c_0}\Bigr)^\alpha r(\log r)$. Hence
\[(d_{i,j}-d_{i,j+1})\,r^2(\log r)k
\geq \beta_0\Bigl(\frac{d_{i,j}}{c_0}\Bigr)^\alpha r(\log r),\]
and therefore
\[\frac{\beta_0}{rkc_0^{\alpha}}
\leq \frac{d_{i,j}-d_{i,j+1}}{d_{i,j}^\alpha}\leq \int_{d_{i,j+1}}^{d_{i,j}}x^{-\alpha}\,dx
=\frac{d_{i,j}^{1-\alpha}-d_{i,j+1}^{1-\alpha}}{1-\alpha},\]
where we used that $x\mapsto x^{-\alpha}$ is decreasing. Note that $d_{i,0}
\leq c_i$ and $d_{i,|V_i|}=c_{i+1}$.
Summing over $0\leq j\leq |V_i|$, we obtain
\[\frac{|V_i|}{rk}
\leq \frac{c_0^\alpha}{\beta_0(1-\alpha)}
\sum_{j=0}^{|V_i|-1}\bigl(d_{i,j}^{1-\alpha}-d_{i,j+1}^{1-\alpha}\bigr)
\leq \frac{c_0^\alpha}{\beta_0(1-\alpha)}
\bigl(c_i^{1-\alpha}-c_{i+1}^{1-\alpha}\bigr).\]
By summing over $0\leq i<t$, we conclude that
\[\sum_{i=0}^{t-1}\frac{|V_i|}{rk}
\leq
\frac{c_0^\alpha}{\beta_0(1-\alpha)}
\sum_{i=0}^{t-1}\bigl(c_i^{1-\alpha}-c_{i+1}^{1-\alpha}\bigr)
\leq
\frac{c_0}{\beta_0(1-\alpha)}
\leq \frac1{10},\]
establishing the second part of the claim.    
\end{proof}

Now, we are ready to describe the colouring of $G$. Note that the edges of $G$ can be partitioned as
\[E(G)=E(R)\cup\bigcup_{i=0}^{t-1}E(H_i),\]
where $R$ is a subgraph of $G$ which has a vertex cover given by $C=\bigcup_{i=-1}^{t-1}V_i$.

We first colour $\bigcup_{i=0}^{t-1}E(H_i)$. Since $t\leq r/8$ and each $H_i$ is $P_k$-free, assigning a distinct colour to each $H_i$ gives a monochromatic $P_k$-free colouring using at most $r/8$ colours.

It remains to colour $E(R)$. Note that \[|C|=\sum_{i=-1}^{t-1}|V_i|\leq \frac{rk}{90}+\frac{rk}{10}=\frac{rk}{9},\] we can partition $C$ into at most $r/4$ sets, each of size less than $k/2-1$. By assigning each edge of $R$ to a colour corresponding to the index of either one of its endpoints, we obtain a colouring of $E(R)$ with $r/4$ colours, where we break ties arbitrarily. Each colour class is a vertex-disjoint union of at most $k/2-1$ stars, so this gives a monochromatic $P_k$-free colouring.

Combining the two colourings above gives a monochromatic $P_k$-free colouring of $E(G)$
using at most $r/4+r/8<r$ colours
\end{proof}

\section*{Acknowledgements}
We would like to thank Marcelo Campos for insightful discussions. AL and CB are grateful to Dylan Toh for his helpful feedback on an earlier draft of this paper. AL would also like to thank the Mathematics Department at the University of Cambridge for their hospitality and the Stanford EDGE Doctoral Fellowship Program for their generous support of her research visits to Cambridge.

\bibliographystyle{plain}
\bibliography{ref.bib}

@article{K19,
    AUTHOR = {Krivelevich, Michael},
     TITLE = {Long cycles in locally expanding graphs, with applications},
   JOURNAL = {Combinatorica},
  FJOURNAL = {Combinatorica. An International Journal on Combinatorics and
              the Theory of Computing},
    VOLUME = {39},
      YEAR = {2019},
    NUMBER = {1},
     PAGES = {135--151},
      ISSN = {0209-9683,1439-6912},
   MRCLASS = {05C38 (05C35 05C57 05C80 05D10)},
  MRNUMBER = {3936195},
MRREVIEWER = {John\ Haslegrave},
       DOI = {10.1007/s00493-017-3701-1},
       URL = {https://doi.org/10.1007/s00493-017-3701-1},
}

@article{EFRS78,
    AUTHOR = {Erd{\H o}s, P. and Faudree, R. J. and Rousseau, C. C. and
              Schelp, R. H.},
     TITLE = {The size {R}amsey number},
   JOURNAL = {Period. Math. Hungar.},
  FJOURNAL = {Periodica Mathematica Hungarica. Journal of the J\'anos Bolyai
              Mathematical Society},
    VOLUME = {9},
      YEAR = {1978},
    NUMBER = {1-2},
     PAGES = {145--161},
      ISSN = {0031-5303,1588-2829},
   MRCLASS = {05C55},
  MRNUMBER = {479691},
MRREVIEWER = {F.\ Harary},
       DOI = {10.1007/BF02018930},
       URL = {https://doi.org/10.1007/BF02018930},
}

@article{B83,
    AUTHOR = {Beck, J\'ozsef},
     TITLE = {On size {R}amsey number of paths, trees, and circuits. {I}},
   JOURNAL = {J. Graph Theory},
  FJOURNAL = {Journal of Graph Theory},
    VOLUME = {7},
      YEAR = {1983},
    NUMBER = {1},
     PAGES = {115--129},
      ISSN = {0364-9024,1097-0118},
   MRCLASS = {05C55},
  MRNUMBER = {693028},
MRREVIEWER = {Saul\ Stahl},
       DOI = {10.1002/jgt.3190070115},
       URL = {https://doi.org/10.1002/jgt.3190070115},
}

@incollection{CFS15,
    AUTHOR = {Conlon, David and Fox, Jacob and Sudakov, Benny},
     TITLE = {Recent developments in graph {R}amsey theory},
 BOOKTITLE = {Surveys in combinatorics 2015},
    SERIES = {London Math. Soc. Lecture Note Ser.},
    VOLUME = {424},
     PAGES = {49--118},
 PUBLISHER = {Cambridge Univ. Press, Cambridge},
      YEAR = {2015},
      ISBN = {978-1-107-46250-2},
   MRCLASS = {05-02 (05C55)},
  MRNUMBER = {3497267},
}

@article{DP17,
    AUTHOR = {Dudek, Andrzej and Pra{\l}at, Pawe{\l}},
     TITLE = {On some multicolor {R}amsey properties of random graphs},
   JOURNAL = {SIAM J. Discrete Math.},
  FJOURNAL = {SIAM Journal on Discrete Mathematics},
    VOLUME = {31},
      YEAR = {2017},
    NUMBER = {3},
     PAGES = {2079--2092},
      ISSN = {0895-4801,1095-7146},
   MRCLASS = {05D10 (05C55 05C80)},
  MRNUMBER = {3697158},
       DOI = {10.1137/16M1069717},
       URL = {https://doi.org/10.1137/16M1069717},
}

@incollection{RS98,
    AUTHOR = {Raab, Martin and Steger, Angelika},
     TITLE = {``{B}alls into bins''---a simple and tight analysis},
 BOOKTITLE = {Randomization and approximation techniques in computer science
              ({B}arcelona, 1998)},
    SERIES = {Lecture Notes in Comput. Sci.},
    VOLUME = {1518},
     PAGES = {159--170},
 PUBLISHER = {Springer, Berlin},
      YEAR = {1998},
      ISBN = {3-540-65142-X},
   MRCLASS = {68R05 (68M20)},
  MRNUMBER = {1729169},
       DOI = {10.1007/3-540-49543-6\_13},
       URL = {https://doi.org/10.1007/3-540-49543-6_13},
}

@book{MU17,
    AUTHOR = {Mitzenmacher, Michael and Upfal, Eli},
     TITLE = {Probability and computing},
   EDITION = {Second},
      NOTE = {Randomization and probabilistic techniques in algorithms and
              data analysis},
 PUBLISHER = {Cambridge University Press, Cambridge},
      YEAR = {2017},
     PAGES = {xx+467},
      ISBN = {978-1-107-15488-9},
   MRCLASS = {68-01 (60C05 60G42 60J10 60K25 62H30 68W20 68W40)},
  MRNUMBER = {3674428},
}

@article{G81,
    AUTHOR = {Gonnet, Gaston H.},
     TITLE = {Expected length of the longest probe sequence in hash code
              searching},
   JOURNAL = {J. Assoc. Comput. Mach.},
  FJOURNAL = {Journal of the Association for Computing Machinery},
    VOLUME = {28},
      YEAR = {1981},
    NUMBER = {2},
     PAGES = {289--304},
      ISSN = {0004-5411,1557-735X},
   MRCLASS = {68E05 (68H05)},
  MRNUMBER = {612082},
       DOI = {10.1145/322248.322254},
       URL = {https://doi.org/10.1145/322248.322254},
}

@techreport{Steel53,
  author       = {Steel, Robert G. D.},
  title        = {Relation Between Poisson and Multinomial Distributions},
  institution  = {Cornell University, Biometrics Unit},
  number       = {BU-39-M},
  year         = {1953},
  month        = apr,
}

\section{Deferred balls-and-bins analysis}\label{app:balls-bins}

In this appendix, we study the \emph{balls and bins problem}, and we refer the reader to the main body of this paper for a short list of references for this problem. 

We begin by briefly recalling the set-up of the problem and our notation for the various random variables. Suppose that we sequentially throw $d$ balls into $q$ bins by placing each ball into a bin chosen independently and uniformly at random. Here we study the maximum number of balls in a bin, which we denote by $M_{q,d}$. In our application, we are mostly interested in $W(q,d) = \mathbb{E}\, M_{q,d}/d$. We first prove an efficient bound for $\mathbb{E}\, M_{q,d}$ in Section~\ref{sec:appMbound}, and then establish some monotonicity properties of $W(q,d)$ in Section~\ref{sec:appWprop}.

\subsection{Bounds on $\mathbb{E}\, M_{q,d}$}\label{sec:appMbound}

\begin{fact}\label{fact:W-ez}
    For any $q,d\in \NN$, we have the bound $\mathbb{E} M_{q,d} \geq \max\big\{ \tfrac{d}{q}, 1 \big\}$.
\end{fact}
\begin{proof}
    By the pigeonhole principle, no matter how the balls are distributed among the bins, there will be one with at least $d/q$ balls in it. It is also clear that there will be at least one non-empty bin. Therefore, $M_{q,d} \geq \max\big\{ \tfrac{d}{q}, 1 \big\}$ and $\mathbb{E} M_{q,d} \geq \max\big\{ \tfrac{d}{q}, 1 \big\}$.
\end{proof}

Next, we prove the driving force behind the key lemma in our paper; we restate Lemma~\ref{lem:ballsunified} in an equivalent form for the reader's convenience. 

\begin{lemma}[Lemma~\ref{lem:ballsunified}]\label{lem:ballsunifed-appendix}
   For $d,q \in \NN$ with $q > 1$, let $x>1$ be the unique solution to $\frac{q \log q}{d}=x \log x$. Then $\mathbb{E}\,M_{q,d} \geq \frac{xd}{10q}$.
\end{lemma}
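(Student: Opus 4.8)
The plan is to prove the claimed lower bound on $\mathbb{E}\,M_{q,n}$ by the second moment method, following the heuristic already given in the body: the number of bins with at least $t$ balls is concentrated around its mean when that mean is large, so $M_{q,n}\geq t$ with good probability for $t$ just below the threshold. Concretely, fix $t = \lceil xn/q\rceil$ (up to a small constant adjustment) where $x$ solves $x\log x = (q\log q)/(2n)$, and let $Z = |\{i\in[q] : |f^{-1}(i)|\geq t\}|$ be the number of heavy bins. First I would compute $\mathbb{E}\,Z = q\,\mathbb{P}(\mathrm{Bin}(n,1/q)\geq t)$ and show, using the standard binomial tail estimate $\mathbb{P}(\mathrm{Bin}(n,1/q)\geq t)\geq \binom{n}{t}q^{-t}(1-1/q)^{n-t} \geq c\,(n/(qt))^{t}$ for an absolute constant $c$ (valid since $t = O(n/q)$ stays well below $n/2$, so the single term $k=t$ already dominates up to constants), that the choice of $t$ via the equation $x\log x = (q\log q)/(2n)$ forces $\mathbb{E}\,Z$ to be large — at least $q^{1/2}$ or so, certainly $\to\infty$. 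The exponent bookkeeping here is the usual one: $q\cdot(qt/n)^{-t} \approx q \cdot x^{-xn/q} = q\cdot \exp(-(xn/q)\log x) = q\cdot \exp(-(\log q)/2) = q^{1/2}$.

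Next I would bound the variance. Writing $Z = \sum_{i} \mathbb{1}[|f^{-1}(i)|\geq t]$, negative association of the occupancy counts (or a direct computation) gives $\mathrm{Var}(Z)\leq \mathbb{E}\,Z$, or at worst $\mathbb{E}\,Z + q(q-1)(\mathbb{P}(X_1\geq t, X_2\geq t) - \mathbb{P}(X_1\geq t)^2)$ with the cross term non-positive by negative correlation of $X_1,X_2$; either way $\mathrm{Var}(Z)\leq \mathbb{E}\,Z$. Then Chebyshev (or the Paley–Zygmund inequality) yields $\mathbb{P}(Z\geq 1)\geq \mathbb{P}(Z > 0) \geq 1 - \mathrm{Var}(Z)/(\mathbb{E}\,Z)^2 \geq 1 - 1/\mathbb{E}\,Z \geq 1/2$ once $\mathbb{E}\,Z\geq 2$. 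On the event $Z\geq 1$ we have $M_{q,n}\geq t\geq xn/q$, so $\mathbb{E}\,M_{q,n}\geq t\cdot\mathbb{P}(Z\geq 1)\geq \tfrac12\cdot\tfrac{xn}{q}$, which is even better than the stated $\tfrac{1}{10}\cdot\tfrac{xn}{q}$; the slack in the constant $10$ (versus the clean $2$) is exactly what absorbs the rounding of $t$ to an integer, the $\binom{n}{t}q^{-t}(1-1/q)^{n-t}$ versus $(n/(qt))^t$ discrepancy, and small-$q$ or small-$n$ edge cases (which can alternatively be dispatched using Fact~\ref{fact:W-ez}, since $x$ is bounded when $q\log q = O(n)$).

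The main obstacle I anticipate is making the first-moment lower bound $\mathbb{E}\,Z \geq 2$ genuinely rigorous across the whole range of $q,n\in\NN$ rather than just in the asymptotic regime. Two things need care: (a) justifying that the binomial tail $\mathbb{P}(\mathrm{Bin}(n,1/q)\geq t)$ is at least a constant times its leading term $\binom{n}{t}q^{-t}$ — this requires $t$ to be bounded away from $n/2$, which holds because $x$ (hence $t/n \approx x/q$) is controlled, but it must be checked that $x$ cannot be so large that $t$ approaches $n$; and (b) converting the equation $x\log x = (q\log q)/(2n)$ into a usable two-sided estimate on $x$ and then on $\binom{n}{t}q^{-t}$ via Stirling, keeping track of all constants so that the final product $q\cdot\binom{n}{t}q^{-t}(1-1/q)^{n-t}$ provably exceeds $2$. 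When $x$ is close to $1$ (i.e. $q\log q$ small compared to $n$), the bound $\tfrac{xn}{10q}$ is weak and follows directly from $M_{q,n}\geq n/q$ (Fact~\ref{fact:W-ez}) since $x<10$ there; so the real content is the regime where $x$ is large, where the Stirling estimates are clean and the concentration argument bites. I would structure the proof by splitting on, say, whether $x\leq 10$ (use pigeonhole) or $x > 10$ (use the second moment argument above).
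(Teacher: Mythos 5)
Your high-level plan is the same one the paper follows: split into a small-$x$ regime dispatched by the pigeonhole bound (Fact~\ref{fact:W-ez}) and a large-$x$ regime handled by a second-moment argument at threshold $t\approx xn/q$. Where you genuinely improve on the paper is the treatment of the cross term: the multinomial occupancy counts are negatively associated (Joag-Dev and Proschan), so $\mathbb{P}(X_1\geq t,\,X_2\geq t)\leq\mathbb{P}(X_1\geq t)\,\mathbb{P}(X_2\geq t)$ and hence $\mathrm{Var}(Z)\leq\mathbb{E}\,Z$ outright. The paper does not invoke negative association; it instead proves a quantitative surrogate (Claim~\ref{claim:covar}) via conditioning and a binomial-tail comparison of Raab and Steger, paying a benign factor $\exp(n/q^2+1/q)$. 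Your route is cleaner here and would give a marginally better constant in the end.

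The gap, which you partially anticipate, is that your sketch of the first-moment lower bound contains a real error, and that is where essentially all the work in this lemma lives. You assert $\binom{n}{t}q^{-t}(1-1/q)^{n-t}\geq c\,(n/(qt))^{t}$ for an absolute constant $c$. Using only $\binom{n}{t}\geq(n/t)^t$ this would require $(1-1/q)^{n-t}\geq c$, which fails: in the large-$x$ regime one has $q\log q>2n$ but $n/q$ can still be of order $\log q$, so $(1-1/q)^{n-t}$ can decay like a negative power of $q$. The correct move is to retain the full Stirling expansion, writing $\binom{n}{t}(1-1/q)^{n-t}=\left(\tfrac{n}{t}\right)^{t}\bigl(\tfrac{n-n/q}{n-t}\bigr)^{n-t}\cdot\Theta(1/\sqrt{t})$, whose middle factor is $\geq 1$ precisely because $t>n/q$; but one must then carry the $1/\sqrt{t}$ prefactor honestly, and $t\approx xn/q$ is \emph{not} $O(n/q)$ (your parenthetical ``$t=O(n/q)$'' implicitly assumes $x=O(1)$, which is the regime you have just excluded: $x$ can be as large as order $q$). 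This is exactly why the paper spends Theorems~\ref{thm:stirling} and~\ref{thm:Gautschi} plus careful bookkeeping to extract $q\,\mathbb{E}\,I_t^{(1)}>1$, and even then only for $q\geq 25$. Relatedly, splitting on $x\leq 10$ alone does not directly make $q$ large, so you should keep an explicit small-$q$ pigeonhole case; and since what one can realistically prove is $\mathbb{E}\,Z>1$ rather than $\mathbb{E}\,Z\geq 2$, you should replace Chebyshev by Paley--Zygmund in the closing step, as the paper does. None of this is a conceptual obstruction to your approach, but as written your proof does not close.
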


We use the following ``Poissonization trick'', which relies on a simple observation. 
A proof can be found, for example, in Steel~\cite{Steel53}.

\begin{fact}
Let $P_1,\dots,P_q$ be independent Poisson random variables with means $\lambda_1,\dots,\lambda_q$. Set $\Lambda=\sum_i \lambda_i$. Then, conditional on $\sum_i P_i=d$, the vector $(P_1,\dots,P_q)$ is distributed as the multinomial random variable $\Mult\!\left(d;\lambda_1/\Lambda,\dots,\lambda_q/\Lambda\right)$. 
\end{fact}

Therefore, if we take $P_1,\dots,P_q$ be to be independent Poisson random variables with the same mean, then conditional on $\sum_{i=1}^q P_i  = n$, $(P_1,\dots,P_q)$ has the same distribution as in the balls-and-bins experiment with $n$ balls and $q$ bins.

\begin{proof}[Proof of Lemma~\ref{lem:ballsunifed-appendix}]
In the regimes $q\log q\leq d$, $d\leq 10$ or $xd/q\leq 2$ the claim follows immediately by Fact~\ref{fact:W-ez}. Therefore, we only focus on the case when $q\log q\geq d$, $d\geq 11$ and $xd/q\geq 2$.






Now let $\mu=d/(2q)$, and let $P_1,\dots,P_q$ be independent $\Pois(\mu)$ random variables. Write 
\[ N =\sum_{i=1}^q P_i \qquad \text{ and } \qquad Z=\max_{i\in[q]} P_i.\] Note that $N\sim \Pois(d/2)$.

For any $d\geq n$ and $t>0$, we have $\PP(M_{q,d}\geq t)\geq \PP(M_{q,n}\geq t)$, as throwing $d-n$ extra balls independently and uniformly at random can only increase the load of the maximum bin. Therefore,
\[\PP(N\leq d,Z\geq t) =\sum_{n=0}^d\PP(N=n,Z\geq t)=\sum_{n=0}^d\PP(N=n)\cdot \PP(M_{q,n}\geq t)\leq \PP(M_{q,d}\geq t).\]
Furthermore,
\[\mathbb E M_{q,d}\geq t\,\PP(M_{q,d}\geq t)\geq t\cdot \PP(N\leq d,Z\geq t)\geq t\cdot \left(\PP(Z\geq t)-\PP(N>d)\right).\]

 Let $p=\PP(\Pois(\mu)\geq t)\geq \PP(\Pois(\mu)=t)$. By independence of the variables $P_i$, we have
\[\PP(Z\geq t)=1-(1-p)^q\geq 1-e^{-qp} \quad \text{and} \quad p\geq e^{-\mu}\frac{\mu^t}{t!}\geq e^{-\mu}\left(\frac{\mu}{t}\right)^t.\]
Now let $t=\lfloor xd/(2q)\rfloor$. Then $t\leq xd/(2q)=\mu x$, so $(\mu/t)^t\geq x^{-t}\geq x^{-xd/(2q)}=q^{-1/2}$. Also, as $q\log q\geq d$, we have $e^{-\mu}=e^{-d/(2q)}\geq e^{-\log(q)/2}=q^{-1/2}$. Therefore $p\geq q^{-1}$ and
\[\PP(Z\geq t)\geq 1-e^{-qp}\geq 1-e^{-1}.\]

Since $N\sim \Pois(d/2)$, it follows that $\mathbb{E}(N) = d/2$ and $\mathrm{Var}(N) = d/2$. By Chebyshev's inequality, it follows that 
\[ \mathbb{P}(N>d) \leq \mathbb{P} \biggl( \bigg| N - \frac{d}{2} \biggr| \geq \frac{d}{2} \biggr) \leq \frac{\mathrm{Var}(N)}{(d/2)^2}= \frac{2}{d} \leq \frac{2}{11},\]
where the final inequality follows from $d \geq 11$. 

Since $xd/q\geq 2$, we have $t=\lfloor xd/(2q)\rfloor\geq xd/(4q)$ and
\[\mathbb E M_{q,d}\geq t\cdot \left(\PP(Z\geq t)-\PP(N>d)\right)\geq \frac{xd}{4q}\cdot \left(1-e^{-1}-\frac{2}{11}\right)\geq \frac{xd}{10q}.\qedhere\]
\end{proof}

\subsection{Properties of $W_0(q,d)$}\label{sec:appWprop}

In this section, we prove the properties of the function $W_0(q,d)$ that we omitted before. To recall, for $q,d\in\NN$ we define

\[W_0(q, d)=\frac{x}{10q} \qquad \text{where} \qquad x\log x= \frac{q \log q}{d}.\]

\mono*

\begin{proof}
It is easy to see that $W_0(q,d)$ is decreasing in $d$. Simply note that since 
 $x\log x = (q\log q)/d$, as $d$ increases $x$ decreases and thus $W_0(q,d) = x/(10q)$ decreases.

To prove that $W_0$ is decreasing in $q$, we put $f(q) = x(q)/q$. It is enough to show $f(q)$ is decreasing in $q$. Using that $x(q)\log x(q) = (q\log q)/d$ we may write 
\[ (f(q)q)\log f(q)q = (q \log q)/d. \]
Also note that since $d \geq1 $ and $x\log x  = (q\log q)/d$, we have $x(q) \leq q$. Thus $0 < f(q) \leq 1$ and $\log f(q) \leq 0$.

Now, for a contradiction, suppose there exists $q' > q > 1$ such that
$f(q') \geq f(q)$. Rearranging the above gives
\[  f(q)\Big( 1 + \frac{\log f(q)}{\log q} \Big) = f(q')\Big( 1 + \frac{\log f(q')}{\log q'} \Big)  =  \frac{1}{d} \]
We note that 
\[ \frac{\log f(q')}{\log q'} > \frac{\log f(q)}{\log q'} \geq \frac{\log f(q)}{\log q } .\] The first inequality follows from the assumption $\log f(q') \geq \log f(q)$. The second inequality follows from the fact $\log f(q') \leq  0 $ and the assumption $q'>q$ and thus $-(\log q')^{-1} >- (\log q)^{-1}$.

Using this in the above gives $f(q) > f(q')$, which is a contradiction. 
\end{proof}

\lambdaballsandbins*

\begin{proof}
    Let $c\in (1/2,1]$ be, so that $q_0=\left\lfloor20r/\lambda\right\rfloor=c\cdot 20r/\lambda$.
    Let $x_0$ be the solution to $\frac{q_0 \log q_0}{d}=x_0 \log x_0$. By the monotonicity property of $W_0(q,d)$, we have
    \[W_0(q,d)\geq W_0(q_0,d)\geq \frac{x_0}{10q_0}\]
    so it suffices to show that $x_0\geq x'$, where $x'=\frac{1200q_0}{\lambda^{0.9}r}$. As $f(x)=x\log x$ is increasing for $x>1$, we seek $x'\log x'\leq x_0\log x_0=\frac{q_0\log q_0}{d}$. This can be seen to be equivalent to
    \[1200 \lambda^{0.1}\log\left(\frac{c\cdot 24000}{\lambda^{1.9}}\right)\leq 1+\frac{\log(20c/\lambda)}{\log r}\]
    The right-hand side is at least 1, while the left-hand side goes to $0$ as $\lambda \to 0$. Therefore, for a suitable choice of $\lambda_0$, the statement follows. 
\end{proof}

\end{document}